\PassOptionsToPackage{final}{graphicx}
\documentclass[12pt,a4paper]{article}

\title{Weighted Dyck paths for nonstationary queues}
\date{\today}
\author{G.~Bet\footnote{Università degli Studi di Firenze}, J.~Selen\footnote{ASML}, A.~Zocca\footnote{Vrije Universiteit Amsterdam}}

%%%%%%%%%%%%%%%%%%%%%%%%% Standard packages

% Language and hyphenation
\usepackage[english]{babel}

% Replicating wide a4 margins using the geometry package
\usepackage{geometry}
\geometry{
  a4paper,
  includeheadfoot,
  hmargin = 2.58cm,
  vmargin = 2cm
}

% Maths packages and settings:
% amsmath is the general math package
\usepackage{amsmath}
\numberwithin{equation}{section} % change the numbering of equations to (#Section.#EquationInSection)

% Enumitem is used to easily use a style of enumeration
\usepackage{enumitem}

% To only build TikZ files when they are updated and use the pdf version otherwise.
\usepackage[mode = buildnew]{standalone}

% To show the simulation and numerical results as .png files
\usepackage{subfig}
\usepackage{graphicx}

% Todo notes
\usepackage{todonotes}

% URLs
\usepackage{hyperref}

% Strikethrough
\usepackage{ulem}

%%%%%%%%%%%%%%%%%%%%%%%%%%% Graphics packages

% To draw my own figures using TikZ or pgfplots (in case of data)
% TikZ and pgfplots package
\usepackage{pgfplots}
\pgfplotsset{compat = newest}
\usetikzlibrary{arrows.meta}
\usetikzlibrary{decorations.pathreplacing}
\usetikzlibrary{decorations.markings}
\usetikzlibrary{calc}
\usetikzlibrary{positioning}

% Introduce an additional layer
\pgfdeclarelayer{arrowlayer}
\pgfsetlayers{main,arrowlayer}

% Change the arrow tip
\tikzset{%
    > = {Latex[width=1.7mm,length=2.2mm]}%
    }%

% Dashed arrow with white arrow heads
\tikzset{%
    dashedarrow/.style = {%
        draw, densely dashed, > = {Latex[width = 1.7mm, length = 2.2mm, open, fill = white]}, decoration = {markings, mark = at position 0.999 with {\begin{pgfonlayer}{arrowlayer} \arrow{>} \end{pgfonlayer}}},
        postaction = {decorate}
    }%
}% 

% Define a style that should be used for each state in a Markov process
\tikzset{%
    state/.style = {%
        draw, circle, minimum size = 4, inner sep = 0, fill = black
    }%
}%

% Package used for if statements
\usepackage{ifthen}

%%%%%%%%%%%%%%%%%%%%%%%%% Macros and theorems

% Load some pre-defined commands and theorems
%%%%%%%%%%%%%%%%%%%%%%%%% Defining the theorems

% Package used to make my own theorem environments
\usepackage{amsthm}

% Defining a theorem style that has bold head and italic body
\newtheoremstyle{italic}% Name
{5pt}% Space above
{5pt}% Space below
{\itshape}% Body font
{}% Indent amount
{}% Theorem head font
{}% Punctuation after theorem head
{.5em}% Space after theorem head
{\bfseries{\thmname{#1}~\thmnumber{#2}.}\thmnote{~\textnormal{(#3)}}}% Theorem head spec (can be left empty, meaning ‘normal’)

% Defining a theorem style for borrowed theorems that has bold head and italic body
\newtheoremstyle{italic_borrowed}% Name
{5pt}% Space above
{5pt}% Space below
{\itshape}% Body font
{}% Indent amount
{}% Theorem head font
{}% Punctuation after theorem head
{.5em}% Space after theorem head
{\bfseries{\thmname{#1}~\thmnumber{#2}.}\thmnote{~\textnormal{[#3]}}}% Theorem head spec (can be left empty, meaning ‘normal’)

% Defining a theorem style that has bold head and upright body
\newtheoremstyle{upright}% Name
{5pt}% Space above
{5pt}% Space below
{\upshape}% Body font
{}% Indent amount
{\bfseries}% Theorem head font
{}% Punctuation after theorem head
{.5em}% Space after theorem head
{\bfseries{\thmname{#1}~\thmnumber{#2}.}\thmnote{~\textnormal{(\textit{#3}\textrm{)}}}}% Theorem head spec (can be left empty, meaning ‘normal’)

% All italic theorems
\theoremstyle{italic}
\newtheorem{theorem}{Theorem}[section]
\newtheorem{lemma}[theorem]{Lemma}
\newtheorem{proposition}[theorem]{Proposition}
\newtheorem{corollary}[theorem]{Corollary}

\theoremstyle{italic_borrowed}

% All upright theorems
\theoremstyle{upright}

% Used specifically for the SIAM paper

%\renewcommand{\theexcont}{\theexample}

%%%%%%%%%%%%%%%%%%%%%%%%% General macro's

% Package used for the font of the indicator function, the blackboard bold font and the bold math package
\usepackage{dsfont}
\usepackage{amsfonts}
\usepackage{bm}

% Package used to create if-then constructions for newcommands
\usepackage{xparse}

% Package used for the coloneqq sign := and do various other non-standard things in math environments
\usepackage{mathtools}

% Package used for the \intercal command
\usepackage{amssymb}

% Defining footnotes without a sign
%

% General
% complex unit root
% equal in distribution
% convergence in distribution
% euler constant
% big O notation
\newcommand{\defi}{\coloneqq}% definition
% definition other way around

% Complex number operators

% Math operators
 % modulus
\newcommand{\ind}[1]{\mathds{1}\{#1\}} % indicator function
 % argsup

% Related to differentiating and integrating
 % upright d

% Set of numbers
\newcommand{\Nat}{\mathbb{N}} % natural numbers starting at 1
 % integer numbers
\newcommand{\Real}{\mathbb{R}} % real numbers
\newcommand{\Complex}{\mathbb{C}} % complex numbers

% Probability operators
\NewDocumentCommand \E { m g }{%
    \IfNoValueTF{#2}% Check if second argument is given
        {\mathbb{E}[#1]}% If argument is not given, then use only the first argument
        {\mathbb{E}_{#1}[#2]} % If both arguments are given, use both
    }%
\NewDocumentCommand \Esc { m g }{%
    \IfNoValueTF{#2}% Check if second argument is given
        {\mathbb{E}\left[#1\right]}% If argument is not given, then use only the first argument
        {\mathbb{E}_{#1}\!\!\left[#2\right]} % If both arguments are given, use both
    }%
\NewDocumentCommand \Efxd { m g }{%
    \IfNoValueTF{#2}% Check if second argument is given
        {\mathbb{E}\Bigl[#1\Bigr]}% If argument is not given, then use only the first argument
        {\mathbb{E}_{#1}\!\Bigl[#2\Bigr]} % If both arguments are given, use both
    }%
\NewDocumentCommand \Prob { m g }{%
    \IfNoValueTF{#2}% Check if second argument is given
        {\mathbb{P}(#1)}% If argument is not given, then use only the first argument
        {\mathbb{P}_{#1}(#2)} % If both arguments are given, use both
    }%
\NewDocumentCommand \Probfxd { m g }{%
    \IfNoValueTF{#2}% Check if second argument is given
        {\mathbb{P}\Bigl(#1\Bigr)}% If argument is not given, then use only the first argument
        {\mathbb{P}_{#1}\!\Bigl(#2\Bigr)} % If both arguments are given, use both
    }%
 % variance

% Distributions

% General vectors
\newcommand{\bld}[1]{\mathbf{#1}} % bold notation for vectors
 % column vector of ones
\NewDocumentCommand \eb { m g }{%
    \IfNoValueTF{#2}% Check if second argument is given
        {\bld{e}_{#1}}% If argument is not given, then use only the first argument
        {\bld{e}^{(#1)}_{#2}} % If both arguments are given, use both
    }%
\NewDocumentCommand \zerob { g }{%
    \IfNoValueTF{#1}% Check if optional argument is given
        {\bld{0}}% If optional argument is not given, then use no arguments
        {\bld{0}^{(#1)}} % If optional arguments are given, use it
    }%
 % transpose of a vector or matrix

% Abbreviating common parameters
\newcommand{\la}{\lambda} % arrival rate

\newcommand{\vc}[1]{\mathbf{#1}} % notation for vectors

% Random-product representation
 % notation used for all variables associated with the alternative process
 % (continuous) time at which the alternative process enters a set of states
 % (discrete) number of jumps after which the alternative process hits a set of states
 % the fractions of transition rates used in the random-product representation

% Recursion coefficients
 % coefficients of the second recursion
 % constants of the second recursion
\NewDocumentCommand \rtwo { m g g }{% coefficients of the second recursion
    \IfNoValueTF{#2}% Check if second argument is given
        {w_{#1}}% If argument is not given, then use only the first argument
        {w^{(#1)}_{#2}(#3)} % If both arguments are given, use both
    }%
 % constants of the second recursion

%%%%%%%%%%%%%%%%%%%%%%%%% Macros related smart polling paper

 % level: vertically aligned set of states

%%%%%%%%%%%%%%%%%%%%%%%%% Macros related smart polling paper

\newcommand{\stateSpace}{\mathfrak{S}} % state space
\newcommand{\set}[1]{\mathcal{#1}} % set of states 

%%%%%%%%%%%%%%%%%%%%%%%%% Gianmarco's macros

\newcommand{\DG}{$\Delta_{(i)}/G/1$~}
\begin{document}
\maketitle

\begin{abstract}
We consider a model for a queue in which only a fixed number $N$ of customers can join. Each customer joins the queue independently at an exponentially distributed time. Assuming further that the service times are independent and follow an exponential distribution, this system can be described as a two-dimensional Markov process on a finite triangular region $\stateSpace$ of the square lattice. We interpret the resulting random walk on $\stateSpace$ as a Dyck path that is weighted according to some state-dependent transition probabilities that are constant along one axis, but are rather general otherwise. We untangle the resulting intricate combinatorial structure by introducing appropriate generating functions that exploit the recursive structure of the model. This allows us to derive a fully explicit expression for the probability density function of the number of customers served in any busy period (equivalently, of the length of any excursion of the Dyck path above the diagonal) as a weighted sum with alternating sign over a certain subclass of Dyck paths, whose study is of independent interest.
\end{abstract}

\section{Introduction}%
Time-dependent queueing models are powerful tools for the analysis of real-life situations where the long-term behaviour of a system is not a good approximation for its performance. Examples of applications include call centers \cite{brown2005statistical} and outpatient wards of hospitals where the server operates only over a finite amount of time \cite{kim2014callcenter, kim2014choosing}. On the other hand, rigorous and explicit results on time-dependent models are mostly out of reach because the standard tools of renewal theory and ergodic theory are often not applicable. In this paper we focus on a certain class of time-dependent models called \textit{transitory queueing systems}, introduced in \cite{honnappa2014transitory}, and defined as systems that operate only during a finite time horizon. Thus only the time-dependent behavior is of interest. Hence transitory queueing systems are time-dependent models that present even greater technical challenges because their steady-state distribution is trivial (all the probability mass is concentrated in zero). One common approach to tackle this issue is to introduce a scaling parameter $N$ in the queueing model and approximate the resulting system with the asymptotic model obtained by taking $N\to\infty$. This approximation is justified in terms of stochastic-process limits, see e.g., \cite{StochasticProcess, whitt2010queues} and references therein. This approach is robust because it relies on a functional Central Limit Theorem and it has proven to be highly successful. However, this approach has two drawbacks. First, the asymptotic results yield precise approximations only for very large $N$, and often accurate error estimates are not available. Second, the asymptotic model is often still too complicated to be analyzed exactly, and thus further approximations are needed. In this paper we aim at developing novel tools for the analysis of transient queueing systems that do not rely on \textit{any} approximation scheme and that provide \textit{explicit} formulas for the relevant performance metrics. We emphasize that our approach is not meant to replace the classical asymptotic approximation scheme, but rather to complement it when the approximations it provides are unreliable or analytically intractable.

The canonical model for the study of transitory queueing systems is the so-called \DG model \cite{honnappa2015delta,honnappa2014transitory} in which a single queue serves a finite pool of $N$ potential customers, where $N$ will be fixed throughout this paper. Each customer joins the queue at a time $T_i$, where $(T_i)_{i=1}^N$ are positive i.i.d.~random variables. Once in the queue, customers are served in a first-come-first-served fashion. Each customer requires an amount of service $S_i$, where are i.i.d.~random variables which are independent from the $T_i$. Once a customer is served, they leave the system permanently. The \DG model was first introduced in \cite{honnappa2015strategic}, where it emerged as the solution of a game-theoretic optimization problem in a queueing setting. Furthermore, in \cite{honnappa2014transitory} it was proven that, under the appropriate scaling, several other transitory models have the same asymptotic behavior as the \DG model. Hence, the \DG model should be seen as the canonical transitory queueing model, similarly as how the $G/G/1$ queue is the canonical stationary queueing model. The asymptotic regime $N\to\infty$ of the \DG queue has been studied extensively in recent years. In \cite{honnappa2015delta} the authors prove a functional Law of Large Numbers (fLLN) and a functional Central Limit Theorem (fCLT) for the queue-length process. They identify the limit processes explicitely, but these are considerably difficult to analyze and explicit formulas for quantities of interest are not available. In a series of works \cite{bet2016alternative,bet2014heavy,bet2015finite,bet2016when} the authors consider the \DG queue in the heavy-traffic regime that is obtained by assuming the instant of peak congestion is at $t=0$. Their results are also fCLT's for the queue-length process. In all the cases, the limit process is a reflected stochastic process with negative quadratic drift, for which several explicit expressions for quantities of interest are available, see \cite{bet2014heavy} for details. 

Here we offer a new perspective on the \DG model, which we now summarize. We assume that the arrival times $T_i$ are exponentially distributed with rate $\lambda$, and that the service times $S_i$ are exponentially distributed with mean $1/\mu$.  We focus on the embedded Markov chain associated to the queueing process, and we show that the path of the Markov chain is a Dyck path of order $N$, that is, a staircase walk in $\Nat^2$ from $(0,0)$ to $(N,N)$ that stays above (but may touch) the diagonal. It follows that the transition probabilities of the Markov chain induce a probability measure on the space of Dyck paths. Our result is then an explicit expression for the probability density function of the excursion lengths of the Dyck path above the diagonal as a weighted sum over a certain subclass of Dyck paths that, roughly speaking, do not avoid the diagonal. Furthermore, we show that our result holds for general transition probabilities that include the transition probabilities associated with the \DG model.

Dyck paths are some of the most well-studied objects in combinatorics and thus the literature on the subject is vast. Perhaps closest to our approach is the work of Viennot \cite{viennot1985combinatorial}. That paper finds general relationships between a certain class of orthogonal polynomials and weighted Motzkin paths, which are a generalization of Dyck paths that allow for diagonal jumps. In particular, it shows that the elements of the inverse coefficient matrix of the polynomials are related to the sum of the weights of all Motzkin paths starting in $(0,0)$ and with varying length and endpoint. This is in line with our proof technique for Proposition \ref{prop:A_inverse_explicit_expression}. The authors in \cite{luczak2004building} provide a probabilistic procedure to iteratively grow certain general combinatorial structures $(T_k)_{k=1}^{\infty}$ in such a way that at each step the law of $T_k$ is uniform among all possible such structures of size $k$. Similarly, in our model a random Dyck path of order $N$ is generated via a local mechanism, i.e., by giving transition probabilities at each lattice site.

The rest of the paper is organized as follows. In Section \ref{sec:model_description} we define the \DG model formally and we state our main result. In Section \ref{sec:distribution_first_busy_period} we prove our main result by first developing a recursion for the distribution of the number of customers served in the first busy period, and then solving the recursion explicitely. 

%%%%%%%%%%%%%%%%%%%%%%%%%%%%%%%%%%%%%%%%%%%%%%%%%%%%%%%
%%%%%%%%%%%%%%%%%%%%%%%%%%%%%%%%%%%%%%%%%%%%%%%%%%%%%%%
%%%%%%%%%%%%%%%%%%% NEW SUBSECTION %%%%%%%%%%%%%%%%%%%%
%%%%%%%%%%%%%%%%%%%%%%%%%%%%%%%%%%%%%%%%%%%%%%%%%%%%%%%
%%%%%%%%%%%%%%%%%%%%%%%%%%%%%%%%%%%%%%%%%%%%%%%%%%%%%%%
\section{Model description, Dyck paths and main result}%
\label{sec:model_description}%
Consider a single-server queue that serves customers in a first-come first-served manner. There is a finite pool of $N$ customers, each of which enters the system only once. Each customer independently joins the queue after an exponential time with rate $\la$ and requires a service time that is exponentially distributed with rate $\mu$. For notational convenience we denote by 
\begin{align}%
\la_n \defi \la (N - n)
\end{align}%
the arrival rate of customers to the system if $n$ customers have already arrived to the system.

The state of the system at time $t\geq 0$ is described by a vector $X(t) \defi (X_1(t),X_2(t)) \in \Nat^2$ where $X_1(t)$ is the number of completed services at time $t$ and $X_2(t)$ is the number of customers that have joined the system up until time $t$. In view of our assumptions, the process $\{ X(t) \}_{t \ge 0}$ is a Markov process on the state space
\begin{equation}%
\stateSpace \defi \{ (i,j) \in \Nat^2 : 0 \le i \le N, ~ 0 \le j \le i \}.
\end{equation}%ì
The transition rate diagram is depicted in Figure~\ref{fig:transition_rate_diagram}. The Markov process $\{ X(t) \}_{t \ge 0}$ is clearly reducible and admits the trivial equilibrium distribution $\pi$ with $\pi_{N,N}=1$ and $\pi_{i,j}=0$ otherwise.

\begin{figure}[!hbt]%
\centering%
\includegraphics{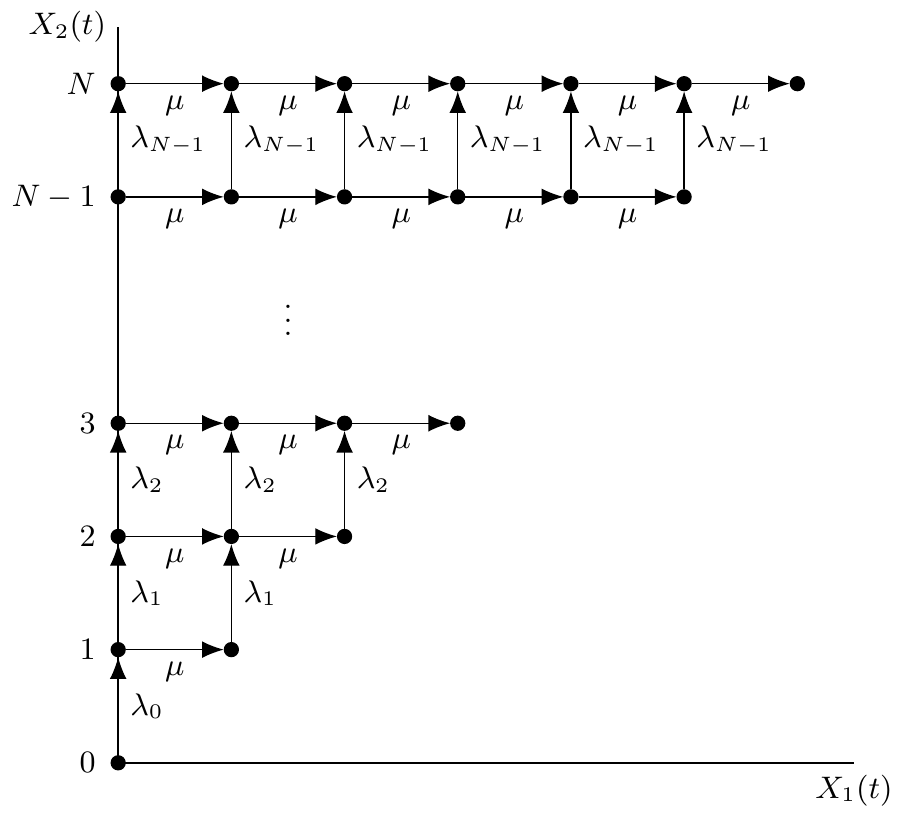}%
\caption{{Transition rate diagram of the Markov process $\{ X(t) \}_{t \ge 0}$.}}%
\label{fig:transition_rate_diagram}%
\end{figure}%

As illustrated in Figure~\ref{fig:transition_rate_diagram}, the state space $\stateSpace$ is highly structured. Our approach crucially leverages this structure. 
We refer to the set of states in the $j$-th row of $\stateSpace$
\begin{align}%
\set{P}_j \defi \{ (0,j),(1,j),\ldots,(j,j) \}
\end{align}%
as the $j$-th \textit{phase}, which corresponds to the situation in which exactly $j$ customers have arrived in the system. 
We denote the collection of diagonal states as $\set{D}_0 \defi \{ (1,1),(2,2),\ldots,(N,N) \}$, and further use the notation $\set{D}_n \defi \{ (0,n),(1,n + 1),\ldots,(N - n,N) \},$ $1\leq n \leq N$ to denote the set of states on the $n$-th superdiagonal of $\stateSpace$. 

It does not seem possible to find an explicit solution for the Kolmogorov equations associated to $X(t)$ due to the time-inhomogeneous arrival process. Therefore we study the associated embedded Markov chain on $\stateSpace$, which we denote, with an abuse of notation, as $(X(k))_{k=0}^{2N}$. Conditionally on $X(k) = (i,j)$ with $i<j$, we have
\begin{align}\label{eq:transition_probabilities_embedded_Markov_chain}%
X(k+1) = 
\begin{cases}
(i+1,j) & \text{with probability } \rho_j \\
(i,j+1) & \text{with probability } 1-\rho_j,
\end{cases}
\end{align}%
where
\begin{equation}\label{eq:rho_n_definition}%
\rho_j \defi \frac{\mu}{\mu + \la_j}.
\end{equation}%
In terms of the queueing system, $\rho_j$ is the probability that a service occurs before an arrival when $j$ customers have already arrived, but not all of them have already been served. Note that, conditionally on $X(k) = (i,i)$, we have $X(k+1) = (i,i+1)$ with probability one. The \DG queueing model corresponds to the choice $\rho_j=0$ if $j=0$ and $\rho_j= \mu/(\mu+\la_j)$ if $j=1,\ldots,N$. We focus on the random variable $S$ describing the number of customers served in the first busy period, which is the time between the instant a customer arrives to an empty system and the instant a customer departs the system leaving behind an empty system. Our main result is an explicit expression for the probability $s_i$ that exactly $i$ customers are served in the first busy period, i.e., $s_i \defi \Prob{S = i}$. 

From the discussion above it follows that the trajectory of the Markov chain is a Dyck path of order $N$. We denote the set of Dyck paths of order $N$ as $\mathfrak D_N$. A Dyck path $u\in\mathfrak D_N$ is fully characterized by the sequence $(u_j)_{j=1}^N$ of jumps to the right at each of the \textit{phases} $\set P_j$, with $j=1,\ldots, N$. With an abuse of notation we write
\begin{align}%
\mathfrak D_N = \Big\{(u_1,\ldots, u_N) \in\Nat^N : \sum_{j=1}^k u_j \leq k\text{ for all }k=1,\ldots, N-1,\text{ and }\sum_{j=1}^N u_j = N\Big\}.
\end{align}%
The transition probabilities \eqref{eq:transition_probabilities_embedded_Markov_chain} induce a probability measure $\bar{\mathbb P}$ on $\mathfrak D_N$ such that,
\begin{align}\label{eq:induced_probability_on_space_of_Dyck_paths}%
\bar{\mathbb P}(u) = \prod_{j=1}^N \rho_j^{u_j} (1-\rho_j)^{\mathds 1_{\{\sum_{i=1}^ju_i<j\}}},\quad u=(u_1,\ldots, u_N)\in\mathfrak D_N.
\end{align}%
From a probabilistic perspective, equation \eqref{eq:induced_probability_on_space_of_Dyck_paths} can be understood as follows: the probability that the Markov chain jumps $u_j$ times to the right at phase $\set P_j$ is $\rho_j^{u_j}$. Moreover, if $\sum_{i=1}^ju_i=j$, then the Markov chain hits the diagonal on $(j,j)$ and in that case it jumps up with probability one. Otherwise, it jumps up with probability $1-\rho_j$. From a combinatorial perspective, $\rho_j$ and $1-\rho_j$ may be interpreted as \textit{weights} associated to their respective edges in $\stateSpace$. Equation \eqref{eq:induced_probability_on_space_of_Dyck_paths} then assigns to the Dyck path $u$ a weight $w(u) \defi \bar{\mathbb P}(u)$, which is simply the product of the weights of the edges it traverses.

Equation \eqref{eq:induced_probability_on_space_of_Dyck_paths} suggests partitioning the state space $\stateSpace$ in the $N$ phases $\set P_1,\ldots, \set P_N$ in order to study the probability measure $\bar{\mathbb P}$. Crucially, the $(j+1)-$th phase may only be reached from the $j$-th phase and the transition probabilities between $\set P_j$ and $\set P_{j+1}$ only depend on $j$. We exploit this recursive structure by associating to each phase a generating function $P_j(z)$ and then expressing $P_{j+1}(z)$ in terms of $P_{j}(z)$. We then obtain the probability density function of the number of customers served in the \textit{first} busy period (equivalently, the probability density function of the length of the first excursion of the associated Dyck path above the diagonal) by computing $P_j(\bar z)$ for some explicit $\bar z\in\Real$. We are able to fully solve this recursion by rewriting it as a linear system of equations and then inverting the coefficients matrix. 

A crucial role in our result will be played by those Dyck paths that hit the diagonal whenever they jump to the right, see Figure \ref{fig:Dyck_path_examples}. We make this precise in terms of the number of right jumps $(u_1,\ldots, u_n)$ of the Dyck path $u$ at each phase $\set P_j$. We define a \textit{feasible allocation} $(u_1,\ldots, u_n)$ in a recursive manner, starting from $u_1$, as follows: $u_1$ is either 1 or 0, then
\begin{enumerate}%
\item If $u_{i-1} = u_{i-2} = \ldots = u_{i-k+1}= 0$, then $u_i$ is either $k$ or 0;
\item If $u_{i-1} \neq 0$, then $u_i$ is either 1 or 0.
\end{enumerate}%
Moreover,  $(u_1,\ldots, u_n)$ is such that $\sum_{i=1}^nu_i=n$.
We denote by $\set{U}_n$ the set of feasible allocations. With a minor abuse of terminology, we refer to elements of $\set U_n$ interchangeabily as feasible allocations and as Dyck paths. The set $\set U_n$ then represents all those Dyck paths of order $n\leq N$ that hit the diagonal whenever they jump to the right. Some examples of feasible allocations for $n = 4$ are $(1,1,0,2)$, $(0,0,3,1)$, $(0,2,0,2)$ and $(0,0,0,4)$. Some examples of unfeasible allocations for $n = 4$ are $(1,0,1,2)$, since $u_3$ must be 0 or 2, $(1,0,0,2)$, since $u_4$ must be 3, and $(0,0,2,2)$, since $u_3$ must be 0 or 3. See Figure \ref{fig:Dyck_path_examples} for an example of both a feasible and an unfeasible allocation in terms of Dyck paths.
\begin{figure}[!hbt]%
\centering%
\includegraphics[scale=0.9]{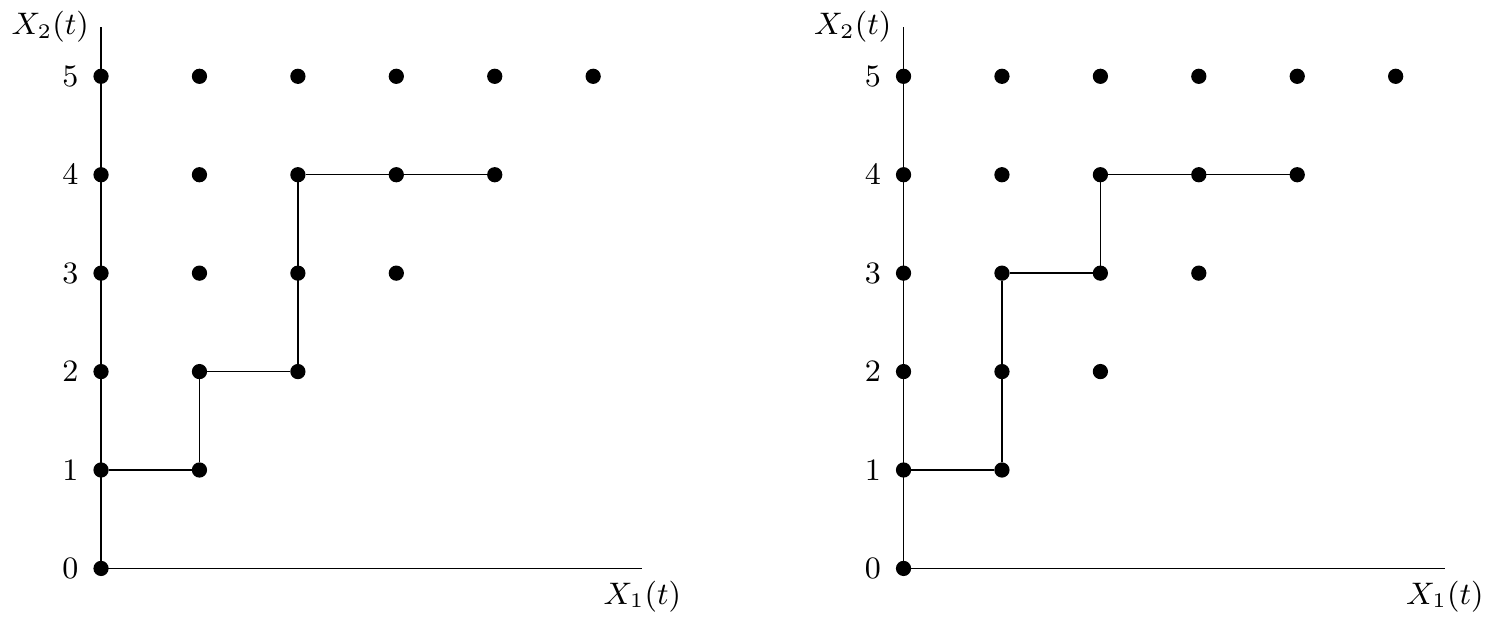}%
\caption{Examples of feasible and unfeasible allocations in $\set{U}_4$ in terms of Dyck paths. The Dyck path on the left corresponds to the feasible allocation $u = (1,1,0,2)$, the one on the right corresponds to the allocation $u = (1,0,1,2)$, which is unfeasible since $u_3$ must be 0 or 2.}%
\label{fig:Dyck_path_examples}%
\end{figure}%
For every Dyck path $u\in \set U_N$ there exists $\set J=\set J(u)\subseteq \{1,\ldots, N\}$ such that \eqref{eq:induced_probability_on_space_of_Dyck_paths} simply reads 
\begin{align}%
\bar{\mathbb P}(u) = \prod_{j\in \set J} \rho_j^{u_j}\prod_{k\in J^{\text c}} (1-\rho_k),
\end{align}%
where $\set J^{\text c} \defi \{1,\ldots,N\}\setminus \set J$. Here the set $\set J$ represents the phases where the Dyck path jumps to the right and hits the diagonal. The set $\set J^{\text c}$ then represents the phases where the Dyck path jumps up without jumping to the right.
Conditioning on the phase in which the path first jumps to the right, it can be shown that
% we easily find an expression for number of elements of $\set{U}_n$ as 
% %
% \begin{equation*}%
% |\,\set{U}_n| = 
% %
% \underbrace{|\,\set{U}_{n - 1}|}_{\textrm{$u_1 = 1$}}
% %
% + \underbrace{|\,\set{U}_{n - 2}|}_{\textrm{$u_2 = 2$}}
% %
% + \underbrace{|\,\set{U}_{n - 3}|}_{\textrm{$u_3 = 3$}}
% %
% + \cdots + \underbrace{|\,\set{U}_{0}|}_{\textrm{$u_n = n$}}\hspace{-0.7cm},
% \end{equation*}%
% %
% with $\vert\,\set{U}_{0}\vert = 1$ and $\vert\,\set{U}_1\vert = 1$. By induction we find that 
$\vert\,\set{U}_n\vert = 2^{n - 1}$ for  $1\leq n \leq N$. 

We are finally able to state our main result.
\begin{theorem}\label{th:main_theorem}
The probability that $i$ customers are served in the first busy period of the \DG queue or, equivalently, the probability that the corresponding Dyck path hits the diagonal for the first time in $(i,i)$ is given by
\begin{align}\label{eq:s_i_explicit_expression_main_thm}%
s_i = \sum_{(u_1,u_2,\ldots,u_i) \in \set{U}_i}b(\rho_{1}^{u_1},\rho_{2}^{u_2},\ldots,\rho_i^{u_i}) \rho_{1}^{u_1} \rho_{2}^{u_2} \cdots \rho_i^{u_i},
\end{align}%
where $b:\Real^i\to\Real$ is an explicit function defined later in \eqref{eq:b_definition_general} and takes both positive and negative values.
\end{theorem}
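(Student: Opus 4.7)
The plan is to exploit the phase decomposition by associating to each phase $\set{P}_j$ a generating function $P_j(z)$ that encodes the entry probabilities of the embedded chain into states of $\set{P}_j$, and then propagating these through a recursion that expresses $P_{j+1}$ in terms of $P_j$. Since the right-transition probability $\rho_j$ is constant within $\set{P}_j$ while hitting $(j,j)$ triggers a deterministic up-jump, this recursion should take a simple affine form in a suitable polynomial basis, and $s_i$ can be extracted as the mass that reaches $(i,i)$ without previously touching the diagonal after the start of the excursion at $(0,1)$.

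First I would write down $P_j$ explicitly and derive the one-step recursion by conditioning on the position at which the chain exits $\set{P}_j$. Roughly, there are two contributions: the \emph{interior} up-transitions weighted by $1-\rho_j$, which map $P_j$ to $P_{j+1}$ via a simple polynomial transformation, and a \emph{boundary} correction accounting for the diagonal state $(j,j)$ where the chain deterministically jumps up. The probability $s_i$ is then recovered as an appropriate coefficient (or linear functional) of $P_i$.

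Next, collecting the coefficients of $(P_j)_{j=1}^{i}$ across phases turns the recursion into a lower-triangular linear system $A \bld{p} = \bld{q}$ whose matrix $A$ has nonzero entries only on the diagonal and first subdiagonal, with entries built from the $\rho_j$ and $1-\rho_j$. I would then invert $A$ explicitly, which should deliver Proposition~\ref{prop:A_inverse_explicit_expression}. In the spirit of Viennot's combinatorial inversion, the entries $(A^{-1})_{k\ell}$ should admit an interpretation as signed sums over lattice trajectories between columns $\ell$ and $k$, in bijection with the feasible allocations in $\set{U}_{k-\ell+1}$. The alternating signs forming $b$ arise naturally through this inversion---for instance via a Neumann-series expansion $A^{-1} = \sum_{n \ge 0}(I-A)^n$, where the parity of $n$ dictates the sign of each summand.

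Finally, substituting the explicit form of $A^{-1}$ into the formula for $s_i$ would yield \eqref{eq:s_i_explicit_expression_main_thm}, with $b$ read off as the signed combinatorial weight attached to each $u \in \set{U}_i$. The main obstacle will be showing that the sum produced by matrix inversion collapses precisely onto the feasible allocations $\set{U}_i$---and not onto the full set of Dyck paths of order $i$---together with pinning down the exact sign convention so that $b$ matches its general definition in \eqref{eq:b_definition_general}. This combinatorial identification is likely where most of the work lies.
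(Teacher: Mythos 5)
Your overall strategy matches the paper's: define phase-indexed generating functions $P_j(z)$, derive a recursion linking consecutive phases, extract $s_i$ as a linear functional of $P_i$, rewrite as a lower-triangular system, and invert. You also correctly flag that pinning down the combinatorial identification of $A^{-1}$ with signed sums over $\set{U}_i$ is where the real work lies. However, there is a structural misstatement that matters. You claim the coefficient matrix is bidiagonal (``nonzero entries only on the diagonal and first subdiagonal''). This is not true in the paper's construction. The recursion $P_{n+1}(z) = G_{\rho_n}(z)\bigl[P_n(z) - s_n z^n\bigr]$ is indeed one-step in the $P_j$'s, but the hidden coupling is that $s_n = \rho_n^n P_n(\rho_n^{-1})$, so after iterating and evaluating at $z = \rho_n^{-1}$ one obtains a recursion for $s_n$ that involves \emph{all} earlier $s_1,\dots,s_{n-1}$. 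The resulting matrix $A$, with entries $(A)_{n,i}=\rho_n^{-i}\prod_{k=i}^{n-1}\la_k/(\la_k-\la_n)$, is fully lower-triangular and its off-diagonal entries carry products of ratios that ultimately produce binomial coefficients when $\la_n=\la(N-n)$. If $A$ were truly bidiagonal the inversion would be essentially trivial and the formula in \eqref{eq:s_i_explicit_expression_main_thm} would not have the delicate structure over $\set{U}_i$; the nontrivial combinatorics arise precisely because $A$ is dense below the diagonal.

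A second imprecision: the Neumann series $A^{-1}=\sum_{n\ge0}(I-A)^n$ does not apply as stated, since the diagonal of $A$ is $\rho_n^{-n}\neq 1$; you would need to factor out the diagonal first, $A^{-1}=\bigl(\sum_{n\ge0}(I-D^{-1}A)^n\bigr)D^{-1}$ where $D=\diag{A}$, after which the series terminates by nilpotency. This is a legitimate alternative to the paper's route, which instead uses the standard recursion $(A^{-1})_{n,i}=-(A^{-1})_{i,i}\sum_{k=i+1}^n (A^{-1})_{n,k}(A)_{k,i}$ and an induction on $n$ whose inductive step encodes conditioning on the phase of the first right jump. Whichever inversion device you pick, the hard step that you correctly anticipate remains the same: identifying the surviving terms with feasible allocations in $\set{U}_i$ and showing that the sign pattern is $(-1)^{M-1}$ with $M$ the number of right-jump phases, not simply the parity of the power in a series expansion. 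As written, your proposal would need the bidiagonal claim corrected before the rest could be carried out.
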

From a combinatorial perspective, $s_i$ may be interpreted as the sum of the weights of all those Dyck paths of order $i$ that do not hit the diagonal, which are in bijection with Dyck paths of order $i-1$. Then, equation \eqref{eq:s_i_explicit_expression_main_thm} may be interpreted as a decomposition of the sum of weighted Dyck paths of order $i-1$ in terms of only those weighted Dyck paths that are associated with feasible allocations in $\set{U}_i$ (the right-hand side). 

Let us briefly make explicit the dependence of $s_i$ on the initial number of customers $N$ as $s_i^{(N)}$. Then, conditionally on $S=n$, the probability that $i$  customers are served in the second busy period is $s_i^{(N-n)}$ and, hence, Theorem \ref{th:main_theorem} gives the joint distribution of the number of customers served in \textit{all} busy periods.

% Let $E_\la$ be an exponential random variable with rate $\la$.

\section{The number of customers in the first busy period}%
\label{sec:distribution_first_busy_period}%
We prove Theorem \ref{th:main_theorem} in two steps. First, in Subsection~\ref{subsec:recursion_s_n} we define a generating function $P_j(z)$ associated to phase $j$ and derive a relation between $P_j(z)$ and $P_{j-1}(z)$. The probabilities $s_i$ are obtained by evaluating $P_n(\bar z)$ in a specific $\bar z = \bar z(n)$, yielding a recursive relation for $s_1,\ldots, s_N$. Then, in Subsection~\ref{subsec:solvingrecursion} we interpret this recursive relation as a linear system $A \mathbf s = \mathbf b$, where $\mathbf s = (s_1,\ldots, s_N)$ and $A$ is a lower-triangular matrix. By calculating explicitly the inverse $A^{-1}$, we finally obtain the explicit expression for the probabilities $\mathbf s=(s_1,\ldots, s_N)$ as stated in \eqref{eq:s_i_explicit_expression_main_thm}.

%%%%%%%%%%%%%%%%%%%%%%%%%%%%%%%%%%%%%%%%%%%%%%%%%%%%%%%
%%%%%%%%%%%%%%%%%%%%%%%%%%%%%%%%%%%%%%%%%%%%%%%%%%%%%%%
%%%%%%%%%%%%%%%%%%% NEW SUBSECTION %%%%%%%%%%%%%%%%%%%%
%%%%%%%%%%%%%%%%%%%%%%%%%%%%%%%%%%%%%%%%%%%%%%%%%%%%%%%
%%%%%%%%%%%%%%%%%%%%%%%%%%%%%%%%%%%%%%%%%%%%%%%%%%%%%%%

\subsection{Developing a recursion}%
\label{subsec:recursion_s_n}%
We begin by introducing some notation. Given any stochastic process $Y$, we let $\E{ y }{f(Y)}$ represent the expectation of a functional of $Y$, conditional on $Y(0) = y$ and similarly for $\Prob{y}{\cdot}$. For every subset $\set{A} \subsetneq \stateSpace$, the hitting-time $H_{\set{A}}$ is the random variable
\begin{equation}%
H_{\set{A}} \defi \inf\{ t > 0 : \lim_{s \uparrow t} X(s) \neq X(t) \in \set{A} \},
\end{equation}%
which describes the first time that the process $\{ X(t) \}_{t \ge 0}$ started at $(0,0)$ enters the subset $\set{A}$. For a singleton $x \in \stateSpace$, $H_x$ should be understood as $H_{\{ x \}}$.

% \alec{Do we use the following fact? If not, remove it.}
% The probabilities $s_n$ can be expressed in terms of the hitting-time random variables as
% %
% \begin{equation}%
% s_n = \Prob{ (0,0) }{ X(H_{\set{D}_0}) = (n,n) }.
% \end{equation}%

Let $p_n(i)$ be the probability that, conditionally on the starting point $X(0) = (0,0)$, the Markov process $\{ X(t) \}_{t \ge 0}$ first visits phase $n$ hitting state $(i,n)$ and without residing in $\set{D}_0$, i.e.,
\begin{equation}%
p_n(i) \defi \Prob{(0,0)}{H_{\set{P}_n} < H_{\set{D}_0}, ~ X(H_{\set{P}_n}) = (i,n)}, \quad 0 \le i \le n, ~ 1 \le n \le N.
\end{equation}%
Note that $p_n(n-1) = p_n(n) = 0$ for $2\leq n\leq N$. Define the generating function of the sequence $(p_n(i))_{i=0}^{n-2}$ as
\begin{equation}%
P_n(z) \defi \sum_{i = 0}^{n - 2} p_n(i) z^i, \quad z \in \Complex,~2 \le n \le N.
\end{equation}%
For notational convenience, we also define $P_1(z) \defi 1$. Clearly, if $N = 1$, then $s_1 = 1$, hence from now on we will focus on $N>1$. The strong Markov property implies that $s_1 = \rho_1$, and furthermore
\begin{equation}%
s_n = \sum_{i=0}^{n-2}p_n(i)\rho_n^{n-i} = \rho_n^n P_n(\rho_n^{-1}), \quad 2 \le n \le N, \label{eqn:relation_GF_and_s_n}
\end{equation}%
where $\rho_n$ is defined in \eqref{eq:rho_n_definition}. Note that \eqref{eqn:relation_GF_and_s_n} implies $s_N = P_N(1)$. Equation \eqref{eqn:relation_GF_and_s_n} is the crucial relation that allows us to obtain a recursive expression for the probabilities $(s_n)_{n=1}^N$ starting from a recursive expression for the generating functions $(P_n(\cdot))_{n=1}^N$.

Finally, let $G_p(z)$ denote the probability generating function of a geometric random variable with support $\{ 0,1,\ldots \}$ and success probability $1-p$, i.e.,
\begin{equation}%
  G_{p}(z) \defi \frac{1 - p}{1 - p z}, \quad \vert z\vert < \frac{1}{p}.
\end{equation}%

We are now ready to state our first result.
\begin{lemma}\label{lem:generating_function_explicit}%
For any choice of positive transition probabilities $(\rho_{j})_{j=1}^N$, the generating functions satisfy the recursion
\begin{align}%
P_{n + 1}(z) = G_{\rho_n}(z) \Bigl[ P_n(z) - s_n z^n \Bigr], \quad 1 \le n \le N - 1.
\end{align}%
In particular,
\begin{equation}\label{eqn:P_n+1(z)_explicit}%
P_{n + 1}(z) = \prod_{i = 1}^{n} G_{\rho_i}(z) - \sum_{i = 1}^n s_i z^i \prod_{j = i}^{n} G_{\rho_j}(z), \quad |z| < \frac{1}{\rho_n}. 
\end{equation}%
\end{lemma}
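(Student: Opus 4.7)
The plan is to derive the one-step recursion by a strong-Markov decomposition at the first entry into phase $n$, and then to obtain the closed form \eqref{eqn:P_n+1(z)_explicit} by induction on $n$.

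For the one-step recursion, I would first argue that every sample path from $(0,0)$ that reaches $(i, n+1)$ without first entering $\set{D}_0$ decomposes uniquely as follows: it enters phase $n$ at some state $(k, n)$ with $0 \le k \le n-2$ (contributing $p_n(k)$), then takes $i - k$ right steps within phase $n$ (each with probability $\rho_n$), and finally a single up step (with probability $1 - \rho_n$). The requirement $(k + (i-k), n) \ne (n, n)$ forces $i \le n-1$. Applying the strong Markov property at $H_{\set{P}_n}$ gives
\begin{equation}
p_{n + 1}(i) = (1 - \rho_n) \sum_{k = 0}^{i} p_n(k) \rho_n^{i - k}, \qquad 0 \le i \le n - 1.
\end{equation}

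The second step is to translate this into generating-function form. Expanding the geometric series in $G_{\rho_n}(z)$ and swapping summation order yields
\begin{equation}
G_{\rho_n}(z) P_n(z) = (1 - \rho_n) \sum_{i \ge 0} z^i \sum_{k = 0}^{\min(i, n - 2)} p_n(k) \rho_n^{i - k},
\end{equation}
whose truncation to indices $i \le n-1$ is precisely $P_{n+1}(z)$. The discarded tail for $i \ge n$ can be evaluated in closed form: pulling out $\rho_n^{-k}$ and summing the remaining geometric series in $z$ gives $(1 - \rho_n) P_n(\rho_n^{-1}) (\rho_n z)^n / (1 - \rho_n z)$, which by the identity $\rho_n^n P_n(\rho_n^{-1}) = s_n$ from \eqref{eqn:relation_GF_and_s_n} simplifies to $s_n z^n G_{\rho_n}(z)$. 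Rearranging produces the stated one-step recursion. The special case $n = 1$, in which $P_1(z) = 1$ holds by convention and $s_1 = \rho_1$, is verified separately by direct substitution into $G_{\rho_1}(z)[1 - \rho_1 z] = 1 - \rho_1 = p_2(0)$.

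Finally, I would establish \eqref{eqn:P_n+1(z)_explicit} by induction on $n$, with the base case $n = 1$ immediate from the one-step recursion and the inductive step obtained by substituting the hypothesis for $P_n(z)$ into $P_{n + 1}(z) = G_{\rho_n}(z)[P_n(z) - s_n z^n]$ and distributing the factor $G_{\rho_n}(z)$ across each summand. I expect the main obstacle to be the closed-form evaluation of the tail $\sum_{i \ge n} \ldots$ and its identification with $s_n z^n G_{\rho_n}(z)$ via \eqref{eqn:relation_GF_and_s_n}; the remainder is careful index bookkeeping, in particular using $p_n(n - 1) = p_n(n) = 0$ to ensure the upper summation limits match those in the definition of $P_n(z)$.
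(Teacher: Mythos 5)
Your proof is correct and follows essentially the same route as the paper's: a strong-Markov decomposition at $H_{\set{P}_n}$ yields the convolution identity for $p_{n+1}(i)$, the geometric structure of $G_{\rho_n}$ converts it into the one-step recursion, the identity $s_n = \rho_n^n P_n(\rho_n^{-1})$ from \eqref{eqn:relation_GF_and_s_n} closes the loop, and the explicit form \eqref{eqn:P_n+1(z)_explicit} follows by iteration/induction with the base case $P_2(z) = 1-\rho_1 = (1-\rho_1 z)G_{\rho_1}(z)$ handled separately. The only cosmetic difference is the bookkeeping: the paper sums the partial geometric series $\sum_{k=0}^{n-2-j}\rho_n^k z^{j+k}$ directly and then recombines, whereas you identify $P_{n+1}(z)$ as the degree-$(n-1)$ truncation of the full product $G_{\rho_n}(z)P_n(z)$ and evaluate the discarded tail $\sum_{i\ge n}$ as $s_n z^n G_{\rho_n}(z)$; these are the same computation organized in opposite order.
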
%

\begin{proof}%
We start by expressing $P_{n + 1}(z)$ in terms of $P_n(z)$. From the strong Markov property at time $H_{\set{P}_n}$ we can write
\begin{align}%
p_{n + 1}(i) &= \sum_{j = 0}^i p_n(j) \rho_n^{i - j} (1 - \rho_n), \quad 0 \le i \le n - 2, \label{eqn:p_n+1(i)_in_terms_of_p_n(j)} \\
p_{n + 1}(n - 1) &= \sum_{j = 0}^{n - 2} p_n(j) \rho_n^{n - 1 - j} (1 - \rho_n). \label{eqn:p_n+1(n-1)_in_terms_of_p_n(j)}
\end{align}%
Multiply both sides of \eqref{eqn:p_n+1(i)_in_terms_of_p_n(j)} by $z^i$ and sum over all $i$ with $0 \le i \le n - 2$ and multiply both sides of \eqref{eqn:p_n+1(n-1)_in_terms_of_p_n(j)} by $z^{n - 1}$. Sum the two resulting expressions to get
\begin{equation}%
P_{n + 1}(z) = \sum_{i = 0}^{n - 2} \sum_{j = 0}^i p_n(j) \rho_n^{i - j} (1 - \rho_n) z^i + \sum_{j = 0}^{n - 2} p_n(j) \rho_n^{n - 1 - j} (1 - \rho_n) z^{n - 1}.
\end{equation}%
Switch the order of the double summation to obtain
\begin{align}%
P_{n + 1}(z) &= (1 - \rho_n) \Bigl[ \sum_{j = 0}^{n - 2} p_n(j) \sum_{i = j}^{n - 2} \rho_n^{i - j} z^i + \sum_{j = 0}^{n - 2} p_n(j) \rho_n^{n - 1 - j} z^{n - 1} \Bigr] \notag \\
&= (1 - \rho_n) \Bigl[ \sum_{j = 0}^{n - 2} p_n(j) \sum_{k = 0}^{n - 2 - j} \rho_n^k z^{j + k} + \sum_{j = 0}^{n - 2} p_n(j) \rho_n^{n - 1 - j} z^{n - 1} \Bigr].
\end{align}%
The summation over $k$ is a geometric sum. Performing this summation and rewriting yields the recursive expression
\begin{align}%
P_{n + 1}(z) &= (1 - \rho_n) \Bigl[ \sum_{j = 0}^{n - 2} p_n(j) \frac{z^j - \rho_n^{n - 1 - j} z^{n - 1}}{1 - \rho_n z} + \sum_{j = 0}^{n - 2} p_n(j) \rho_n^{n - 1 - j} z^{n - 1} \Bigr] \notag \\
&= \frac{1 - \rho_n}{1 - \rho_n z} \Bigl[ \sum_{j = 0}^{n - 2} p_n(j) z^j - z^n \rho_n^n \sum_{j = 0}^{n - 2} p_n(j) \rho_n^{- j} \Bigr] \notag \\
&= G_{\rho_n}(z) \Bigl[ P_n(z) - s_n z^n \Bigr]. \label{eqn:P_n+1(z)_in_terms_of_P_n(z)}
\end{align}%
To prove the explicit expression \eqref{eqn:P_n+1(z)_explicit} we iterate the recursion \eqref{eqn:P_n+1(z)_in_terms_of_P_n(z)}, obtaining
\begin{equation}%
P_{n + 1}(z) = P_2(z) \prod_{i = 2}^n G_{\rho_i}(z) - \sum_{i = 2}^n s_i z^i \prod_{j = i}^n G_{\rho_j}(z), \label{eqn:P_n+1(z)_in_terms_of_P_2(z)}
\end{equation}%
which we can further simplify by noting that
\begin{equation}%
P_2(z) = p_2(0) = 1 - \rho_1 = (1 - \rho_1 z) G_{\rho_1}(z).
\end{equation}%
Since $s_1 = \rho_1$, we finally obtain \eqref{eqn:P_n+1(z)_explicit}. 
\end{proof}%

Note that for the \DG queue we have $\rho_i^{-1} = (\mu+\la_i)/\mu >(\mu+\la_j)/\mu = \rho_j^{-1}$ for any $i<j$. Therefore, $G_{\rho_{i}}(\rho_n^{-1})$ is well defined for all $i < n$.

In the proof of Lemma \ref{lem:generating_function_explicit} we did not make use of the precise expression of $\rho_n$, and so \eqref{eqn:P_n+1(z)_explicit} still holds when replacing $\lambda_i$ with any sequence of positive decreasing numbers. Combining Lemma \ref{lem:generating_function_explicit} with \eqref{eqn:relation_GF_and_s_n} allows us to obtain a recursive expression for $s_n$. We first present the expression for $s_n$ for a general decreasing sequence $(\la_n)_{n=1}^N$, and then the one obtained when setting $\la_n = \la (N - n)$. We adopt the convention that the empty sum $\sum_{i = 1}^{0} (\cdot) = 0$ and the empty product $\prod_{i = 1}^{0} (\cdot) = 1$. 
% You can check that these conventions are consistent by taking the logarithm on either side of the \prod convention

\begin{corollary}\label{cor:recursion_s_n}%
Assume $(\la_n)_{n=1}^N$ is a sequence such that ${\la_1>\ldots>\la_{N-1}>\la_N = 0}$. Then, 
\begin{align}\label{eq:general_recursion_s_n}%
s_n = \rho_n^n \prod_{k=1}^{n-1}\frac{\la_k}{\la_k - \la_n} - \sum_{i = 1}^{n - 1} s_i \rho_n^{n - i} \prod_{k=i}^{n-1}\frac{\la_k}{\la_k - \la_n},\quad 2 \le n \le N,
\end{align}%
with initial term $s_1 = \rho_1$. In particular, when $\la_n = \la (N - n)$, the probabilities $s_n$ satisfy the recursion
\begin{equation}\label{eq:recursion_s_n}%
s_n = \rho_n^n \binom{N - 1}{n - 1} - \sum_{i = 1}^{n - 1} s_i \rho_n^{n - i} \binom{N - i}{n - i},
%, \quad s_N = 1 - \sum_{i = 1}^{N - 1} s_i
\end{equation}%
with initial term $s_1 = \rho_1$.
\end{corollary}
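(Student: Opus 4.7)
The plan is to prove the corollary by combining the explicit form \eqref{eqn:P_n+1(z)_explicit} of Lemma~\ref{lem:generating_function_explicit} with the identity $s_n = \rho_n^n P_n(\rho_n^{-1})$ from \eqref{eqn:relation_GF_and_s_n}. Replacing $n$ by $n-1$ in \eqref{eqn:P_n+1(z)_explicit} and evaluating at $z = \rho_n^{-1}$ yields
\begin{equation}
P_n(\rho_n^{-1}) = \prod_{i=1}^{n-1} G_{\rho_i}(\rho_n^{-1}) - \sum_{i=1}^{n-1} s_i \rho_n^{-i} \prod_{j=i}^{n-1} G_{\rho_j}(\rho_n^{-1}),
\end{equation}
and multiplying through by $\rho_n^n$ then produces a recursion of the desired shape, provided one can simplify $G_{\rho_i}(\rho_n^{-1})$ in closed form. (The evaluation at $z = \rho_n^{-1}$ is legitimate because $\la_i > \la_n$ for $i < n$, so $\rho_n^{-1}$ lies strictly inside the radius of convergence $\rho_i^{-1}$, as noted in the remark following the lemma.)

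The first key step is a direct computation: writing $\rho_i = \mu/(\mu + \la_i)$, one obtains $1-\rho_i = \la_i/(\mu+\la_i)$ and $1-\rho_i/\rho_n = (\la_i-\la_n)/(\mu+\la_i)$, so that
\begin{equation}
G_{\rho_i}(\rho_n^{-1}) = \frac{1-\rho_i}{1-\rho_i \rho_n^{-1}} = \frac{\la_i}{\la_i - \la_n}.
\end{equation}
Plugging this back into the expression for $\rho_n^n P_n(\rho_n^{-1})$ immediately yields the first claimed recursion \eqref{eq:general_recursion_s_n}, and the base case $s_1 = \rho_1$ is just the strong Markov property observation recorded above \eqref{eqn:relation_GF_and_s_n}.

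The second key step is the specialization to $\la_n = \la(N-n)$. Then $\la_k/(\la_k-\la_n) = (N-k)/(n-k)$, and a telescoping of the resulting products gives
\begin{equation}
\prod_{k=1}^{n-1}\frac{N-k}{n-k} = \frac{(N-1)!/(N-n)!}{(n-1)!} = \binom{N-1}{n-1}, \qquad \prod_{k=i}^{n-1}\frac{N-k}{n-k} = \binom{N-i}{n-i},
\end{equation}
which upon substitution into \eqref{eq:general_recursion_s_n} produces \eqref{eq:recursion_s_n}.

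There is no real obstacle here: the proof is a substitution followed by two elementary algebraic simplifications. The only thing worth flagging is the (already-addressed) point that $\rho_n^{-1}$ must lie in the domain of $G_{\rho_i}$ for all $i < n$, which is guaranteed by the monotonicity hypothesis on $(\la_n)_{n=1}^N$; the proof of Lemma~\ref{lem:generating_function_explicit} itself did not use this monotonicity, so the evaluation step is the only place one needs to invoke it.
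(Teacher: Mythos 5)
Your proof is correct and follows essentially the same path as the paper's: substitute the explicit form of $P_n$ from Lemma~\ref{lem:generating_function_explicit} into $s_n = \rho_n^n P_n(\rho_n^{-1})$, simplify $G_{\rho_i}(\rho_n^{-1}) = \la_i/(\la_i - \la_n)$ by direct algebra, and then telescope the product into a binomial coefficient when $\la_n = \la(N-n)$. The remark about monotonicity guaranteeing $\rho_n^{-1} < \rho_i^{-1}$ for $i < n$ matches the paper's own justification for the evaluation step.
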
%

\begin{proof}%
Combining the result of Lemma~\ref{lem:generating_function_explicit} with \eqref{eqn:relation_GF_and_s_n} yields the following recursion, for $2 \le n \le N - 1$,
\begin{equation}\label{eq:rewriting_generating_function}%
s_n = \rho_n^n \prod_{i = 1}^{n - 1} G_{\rho_i}(\rho_n^{-1}) - \sum_{i = 1}^{n - 1} s_i \rho_n^{n - i} \prod_{j = i}^{n - 1} G_{\rho_j}(\rho_n^{-1}), \quad s_N = 1 - \sum_{i = 1}^{N - 1} s_i.
\end{equation}%
Note that, by our assumption on the sequence $(\la_n)_{n=1}^N$, we have $\rho_1^{-1} > \cdots > \rho_N^{-1} = 1$. Therefore, $G_{\rho_{i}}(\rho_n^{-1})$ is well defined for all $i < n$. The first expression \eqref{eq:general_recursion_s_n} follows from
\begin{equation}\label{eq:rewriting_generating_function_product}%
G_{\rho_k}(\rho_n^{-1}) = \frac{1 - \rho_k}{1 - \frac{\rho_k}{\rho_n}} = \frac{1 - \frac{\mu}{\mu + \la_k}}{1 - \frac{\mu + \la_n}{\mu + \la_k}} = \frac{\la_k}{\la_k - \la_n}.
\end{equation}%
Moreover, when $\la_n = \la(N - n)$ we get
% %
% \begin{equation}
% G_{\rho_k}(\rho_n^{-1}) = \frac{\la_k}{\la_k - \la_n} = \frac{N - k}{n - k}.
% \end{equation}%
% %
% This allows to further simplify the right-hand side of \eqref{eq:general_recursion_s_n} as
%
\begin{equation*}%
\prod_{k = l}^{n - 1} G_{\rho_k}(\rho_n^{-1}) = \prod_{k = l}^{n - 1} \frac{N - k}{n - k} = \frac{N - l}{n - l} \frac{N - l - 1}{n - l - 1} \frac{N - l - 2}{n - l - 2} \cdots \frac{N - n + 1}{1} = \binom{N - l}{n - l},
\end{equation*}%
which proves \eqref{eq:recursion_s_n}.
\end{proof}%

\subsection{Solving the recursion}%
\label{subsec:solvingrecursion}%
In this section we solve the recursion \eqref{eq:general_recursion_s_n} to find an explicit expression for $s_n$. Recall that for $n = 1,2,\ldots,N$,
\begin{align}%
s_n = \rho_n^n \prod_{k=1}^{n-1}\frac{\la_k}{\la_k - \la_n} - \sum_{i = 1}^{n - 1} s_i \rho_n^{n - i} \prod_{k=i}^{n-1}\frac{\la_k}{\la_k - \la_n},
\end{align}%
Divide both sides by $\rho_n^n$ and bring all $s_i$ terms to one side to obtain
\begin{equation}%
\sum_{i = 1}^{n} \frac{s_i}{\rho_n^i} \prod_{k=i}^{n-1}\frac{\la_k}{\la_k - \la_n} = \prod_{k=1}^{n-1}\frac{\la_k}{\la_k - \la_n}. \label{eqn:linear_system_of_equations_s_n}
\end{equation}%
We can write \eqref{eqn:linear_system_of_equations_s_n} in the matrix-vector notation $A \vc{s} = \vc{b}$, where we introduced the column vectors 
\begin{align}
\vc{s} &\defi ( s_i )_{i = 1,2,\ldots,N}, \text{ and } \vc{b} \defi \left( \prod_{k=1}^{n-1}\frac{\la_k}{\la_k - \la_n} \right)_{n = 1,2,\ldots,N}
\end{align}
and the lower-triangular matrix $A$ with element $(n,i)$ given by
\begin{equation}\label{eq:A_definition}%
(A)_{n,i} \defi \frac1{\rho_n^i} \prod_{k=i}^{n-1}\frac{\la_k}{\la_k - \la_n}, \quad 1 \le i \le n \le N.
\end{equation}%
We can calculate $\vc{s}$ as $\vc{s} = A^{-1}\vc{b}$. In particular, since $A$ is a lower-triangular matrix, so is its inverse $A^{-1}$. Hence, we can determine the inverse using the well-known recursive formulas
\begin{align}%
(A^{-1})_{n,n} &= \frac{1}{(A)_{n,n}} = \rho_n^n, \quad n = 1,2,\ldots,N, \\
(A^{-1})_{n,i} &= - (A^{-1})_{i,i} \sum_{k = i + 1}^n (A^{-1})_{n,k} (A)_{k,i}, \quad 1 \le i < n \le N.\label{eq:lower_triangular_inverse}
\end{align}%
This recursion is solved in a specific order. One first determines $(A^{-1})_{n,n}$, for $n = 1,2,\ldots,N$, then all $(A^{-1})_{n,n - 1}$, for $n = 2,3,\ldots,N$, followed by $(A^{-1})_{n,n - 2}$, for $n = 3,4,\ldots,N$, and so on until finally $(A^{-1})_{N,1}$ is reached. We exploit this recursion in order to derive an explicit expression for the elements of the inverse. To that end, we require some additional definitions. For any $n\in\mathbb N$ and any vector $\bm{a}=(a_{k_1},\ldots, a_{k_n}) \in (\mathbb R^+)^n$ indexed by $k_1<k_2<\ldots<k_n$ we define $M=M(\bm{a})$ to be the number of entries of the vector $\bm{a}$ that are not equal to one, i.e.,
\begin{equation}
M=M(\bm{a}) := \sum_{i=1}^n \ind{a_{k_i}\neq 1},
\end{equation}
and by $k_{(1)} < k_{(2)}<\ldots< k_{(M)}$ the ordered indices corresponding to those entries. For notational convenience, we also define $k_{(0)}\defi k_1\leq k_{(1)}$ and $k_{(M+1)} \defi k_n\geq k_{(M)}$, so that
\begin{equation*}
\bm{a} = (a_{k_{(0)}},1,\dots,1,a_{k_{(1)}},1,\dots,1,a_{k_{(2)}},1,\dots,1,a_{k_{(M-1)}},1,\dots,1,a_{k_{(M)}},1,\dots,1,a_{k_{(M+1)}}).
\end{equation*}
We then introduce the function $b: (\mathbb R^+)^n \mapsto \mathbb R$ that associates to the vector $(a_{k_1},\ldots, a_{k_n})$ the scalar $b(a_{k_1},\ldots, a_{k_n})$ defined as
\begin{align}\label{eq:b_definition_general}%
b(a_{k_1},\ldots, a_{k_n}) \defi (-1)^{M-1}\prod_{m=0}^{M} \prod_{k=k_{(m)}}^{k_{(m+1)}-1}\frac{\la_k}{\la_k - \la_{k_{(m+1)}}}.
\end{align}%
Note that, when $\la_n = \la(N - n)$,
\begin{align}\label{eq:b_definition}%
b(a_{k_1},\ldots, a_{k_n}) &= (-1)^{M-1} \prod_{m=0}^{M} \binom{N-k_{(m)}}{k_{(m+1)} - k_{(m)}}.
\end{align}%
Before proceeding, let us motivate definition \eqref{eq:b_definition_general}. We may interpret the right side of \eqref{eq:s_i_explicit_expression_main_thm} as a sum of the weights associated to Dyck paths in $\set U_i$. Then, $b(\cdot)$ represents the contribution of the up jumps to the total weight of the path $u$. The weight of each edge of the path depends on the phase where it is located, hence to compute the total weight of the path it is crucial to keep track of the location of the jumps to the right. This is accomplished by the indices $k_{(1)},\ldots, k_{(M)}$ associated to the Dyck path $u=(u_1,\ldots, u_i)$. In particular, between the $k_{(m)}$-th phase and the $k_{(m+1)}$-th phase, $u$ only makes up jumps, and then $M$ represents the total number of excursions above the diagonal of $u$. In order to prove Theorem \ref{th:main_theorem}, we first obtain an explicit expression for the inverse coefficient matrix $A^{-1}$.
\begin{proposition}\label{prop:A_inverse_explicit_expression}
Assume that $(\la_n)_{n=1}^N$ is a sequence such that ${\la_1>\ldots>\la_{N-1}>\la_N = 0}$. Then, for any $i=1,\ldots, N$ and $n = 1,2,\ldots,i - 1$ we have
\begin{equation}\label{eq:magic_formula}%
(A^{-1})_{i,i - n} = \sum_{(u_1,u_2,\ldots,u_n) \in \set{U}_n} b(\rho_{i - n}^{i - n},\rho_{i - n + 1}^{u_1}\ldots,\rho_i^{u_n}) \rho_{i - n}^{i - n} \rho_{i - n + 1}^{u_1} \rho_{i - n + 2}^{u_2} \cdots \rho_i^{u_n} ,
\end{equation}%
where $b$ was defined in \eqref{eq:b_definition_general}.
\end{proposition}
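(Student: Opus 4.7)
The plan is to prove the formula by induction on the depth $n$ below the diagonal, using the standard triangular-inverse recursion \eqref{eq:lower_triangular_inverse}. Adopting the convention that $\set{U}_0 = \{\emptyset\}$ with empty products equal to $1$ and $b(\rho_i^i) = 1$, the base case $n = 0$ reduces to $(A^{-1})_{i,i} = \rho_i^i$, which is exactly the first part of \eqref{eq:lower_triangular_inverse}.

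For the inductive step, applying \eqref{eq:lower_triangular_inverse} and relabeling $k = i - n + \ell$ gives
\[
(A^{-1})_{i, i-n} \;=\; -\rho_{i-n}^{i-n}\sum_{\ell=1}^{n} (A^{-1})_{i,\, i-n+\ell}\,(A)_{i-n+\ell,\, i-n}.
\]
Each entry $(A^{-1})_{i, i-n+\ell}$ has depth $n - \ell < n$ from the diagonal, so by the inductive hypothesis it expands as a sum over $\set{U}_{n-\ell}$, while the factor $(A)_{i-n+\ell, i-n}$ is explicit by \eqref{eq:A_definition}. The crux is to identify this double sum combinatorially with the target sum over $\set{U}_n$. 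The key observation is that every feasible allocation $(u_1, \ldots, u_n) \in \set{U}_n$ has a unique first nonzero index $\ell \in \{1, \ldots, n\}$, for which necessarily $u_\ell = \ell$, and then $(u_{\ell+1}, \ldots, u_n) \in \set{U}_{n-\ell}$. This yields the disjoint decomposition
\[
\set{U}_n \;=\; \bigsqcup_{\ell=1}^{n} \bigl\{\,(\underbrace{0, \ldots, 0}_{\ell-1},\, \ell,\, u'_1, \ldots, u'_{n-\ell}) : (u'_1, \ldots, u'_{n-\ell}) \in \set{U}_{n-\ell}\,\bigr\},
\]
which matches the outer sum over $\ell$.

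For each $\ell$ one checks three things. First, the factor $\rho_{i-n+\ell}^{i-n+\ell}$ appearing in the inductive formula for $(A^{-1})_{i, i-n+\ell}$ combines with the $\rho_{i-n+\ell}^{-(i-n)}$ in $(A)_{i-n+\ell, i-n}$ to produce $\rho_{i-n+\ell}^{\ell} = \rho_{i-n+\ell}^{u_\ell}$, while the remaining $\rho$-powers from the tail expansion match $\rho_{i-n+\ell+m}^{u_{\ell+m}}$ directly, so the full monomial $\rho_{i-n}^{i-n}\rho_{i-n+1}^{u_1}\cdots\rho_i^{u_n}$ is reassembled (the entries with $u_j = 0$ contribute trivially). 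Second, the product $\prod_{j=i-n}^{i-n+\ell-1} \lambda_j/(\lambda_j - \lambda_{i-n+\ell})$ coming from $(A)_{i-n+\ell, i-n}$ is exactly the $m=1$ factor in the definition \eqref{eq:b_definition_general} of $b$ for the full allocation, since its ordered non-one indices begin with $k_{(1)}=i-n$ and $k_{(2)}=i-n+\ell$; the remaining $\lambda$-ratios come unchanged from the $b$-factor delivered by the inductive hypothesis. Third, prepending the new burst increases the number of non-one entries by exactly one, so the tail sign $(-1)^{M'-1}$ with $M' = M-1$ combines with the leading $-1$ in the recursion to give $(-1)^{M-1}$, matching the sign in \eqref{eq:b_definition_general}.

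The main obstacle is purely notational: one must carefully track how the ordered indices $k_{(m)}$ of non-one entries that define $b$ shift when prepending a new burst, paying attention to the boundary conventions $k_{(0)} = k_1$ and $k_{(M+1)} = k_{n+1}$, and to the fact that the leading entry $\rho_{i-n}^{i-n}$ is always a non-one entry (so the $m=0$ product is empty) and, for feasible allocations, the last entry $\rho_i^{u_n}$ is always non-one as well (so the $m=M$ product is empty). Once this bookkeeping is set up cleanly, summing the identified contributions over $\ell$ and over $\set{U}_{n-\ell}$ reassembles exactly the claimed expression \eqref{eq:magic_formula}.
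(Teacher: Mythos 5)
Your proof is correct and mirrors the paper's argument essentially step for step: induction on the depth below the diagonal via the triangular-inverse recursion \eqref{eq:lower_triangular_inverse}, followed by a combinatorial decomposition of $\set{U}_n$ by the position of the first nonzero entry, which identifies the resulting double sum with the target sum over $\set{U}_n$ and checks that the $\rho$-powers, the $\lambda$-ratio products, and the alternating sign reassemble as required. The only cosmetic difference is the reindexing (your $\ell$ versus the paper's $j = n - \ell$) and your explicit statement of the $n=0$ base case and the associated conventions, which the paper leaves implicit.
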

\begin{proof}
We proceed by induction, by assuming that \eqref{eq:magic_formula} holds for all $m\leq n$ for some $n\in\{1,\ldots, i-1\}$ and then proving it for $n+1$. We use \eqref{eq:lower_triangular_inverse} together with \eqref{eq:A_definition} to obtain
\begin{align}%
(A^{-1})_{i,i-(n+1)} &= -\rho_{i-(n+1)}^{i-(n+1)} \sum_{k=i-n}^{i}(A^{-1})_{i,k}(A)_{k,i-n-1} \notag\\
&= -\rho_{i-(n+1)}^{i-(n+1)} \sum_{j=0}^{n}(A^{-1})_{i,i-j}(A)_{i-j,i-n-1} \notag\\
&=-\rho_{i-(n+1)}^{i-(n+1)} \sum_{j=0}^{n}(A^{-1})_{i,i-j} \frac1{\rho_{i-j}^{i-(n+1)}}\prod_{k=i-(n+1)}^{i-j-1}\frac{\la_k}{\la_k - \la_{i-j}}.\label{eq:lower_triangular_recursion}
\end{align}%
In the last equality we highlight the inductive structure in the product term. To avoid encumbering the computations, let us denote the product in \eqref{eq:lower_triangular_recursion} as
\begin{align}%
\mathcal B_{i,j,n} \defi -\hspace{-0.3cm}\prod_{k=i-(n+1)}^{i-j-1}\frac{\la_k}{\la_k - \la_{i-j}}.
\end{align}%
Inserting the expression for $(A^{-1})_{i,i-j}$ into \eqref{eq:lower_triangular_recursion} gives
\begin{align}%
&\rho_{i-(n+1)}^{i-(n+1)}\sum_{j=0}^{n}(A^{-1})_{i,i-j} \frac1{\rho_{i-j}^{i-n-1}} \mathcal B_{i,j,n}\notag\\
&\quad= \rho_{i-(n+1)}^{i-(n+1)}\sum_{j=0}^n\sum_{(u_1,\ldots,u_j)\in\mathcal U_j}\rho_{i-j}^{i-j}\rho_{i-j+1}^{u_1}\ldots \rho_i^{u_j}b(\rho_{i - j}^{i - j},\rho_{i - j + 1}^{u_1}\ldots,\rho_i^{u_j}) \frac1{\rho_{i-j}^{i-n-1}}\mathcal B_{i,j,n}\notag\\
&\quad= \sum_{j=0}^n\sum_{(u_1,\ldots,u_j)\in\mathcal U_j}\rho_{i-(n+1)}^{i-(n+1)}\rho_{i-j}^{n+1-j}\rho_{i-j+1}^{u_1}\ldots \rho_i^{u_j}b(\rho_{i - j}^{i - j},\rho_{i - j + 1}^{u_1}\ldots,\rho_i^{u_j})\mathcal B_{i,j,n}.
\end{align}%
Now, observe that $(n+1-j) + u_1 + \ldots + u_j = n+1$. Crucially, we also have that
\begin{align}\label{eq:magic_formula_proof_inductive_step}%
&\sum_{j=0}^n\sum_{(u_1,\ldots,u_j)\in\mathcal U_j}\rho_{i-(n+1)}^{i-(n+1)}\rho_{i-j}^{n+1-j}\rho_{i-j+1}^{u_1}\ldots \rho_i^{u_j}b(\rho_{i - j}^{i - j},\rho_{i - j + 1}^{u_1}\ldots,\rho_i^{u_j}) \mathcal B_{i,j,n} \notag\\
&\quad= \sum_{(v_1, \ldots, v_{n+1})\in\mathcal U_{n+1}}\rho_{i-(n+1)}^{i-(n+1)}\rho_{i-n}^{v_1}\ldots \rho_i^{v_{n+1}}b(\rho_{i-(n+1)}^{i-(n+1)}, \rho_{i-n}^{v_1},\ldots, \rho_i^{v_{n+1}}).
\end{align}%
\begin{figure}[!bt]%
\centering%
\includegraphics[scale=0.75]{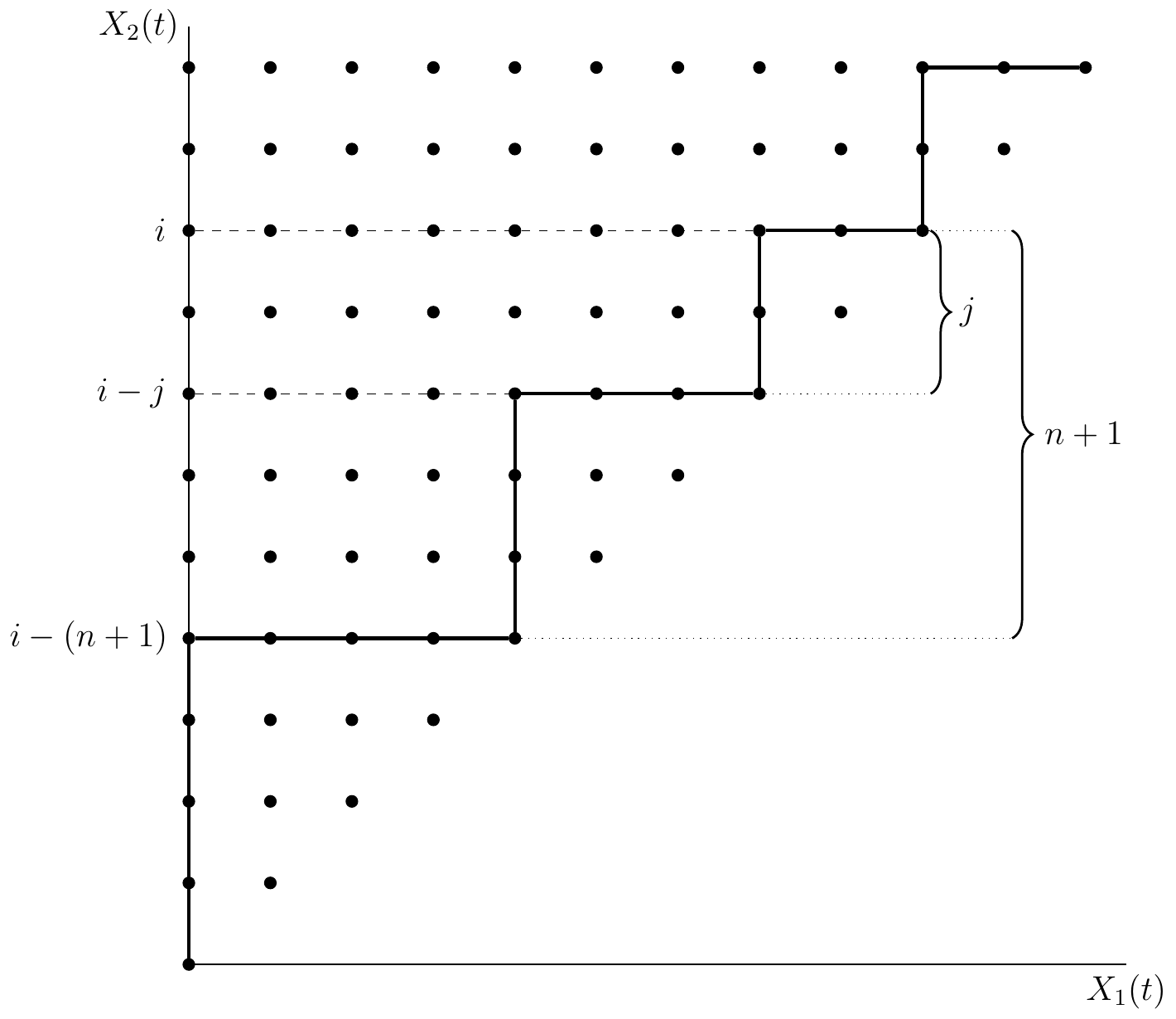}%
\caption{On the left-hand side of the inductive step \eqref{eq:magic_formula_proof_inductive_step}, the first right jump of the Dyck path occurs at phase $i-(n+1)$, and the first jump after that occurs at phase $i-j$. Summing over $j=0,\ldots, n$, one obtains all paths that jump to the right for the first time at phase $i-(n+1)$, which is the right-hand side of \eqref{eq:magic_formula_proof_inductive_step}.}%
\label{fig:Dyck_path_decomposition_example}%
\end{figure}%
Indeed, the left-hand side corresponds to the feasible assignment in which the first jump to the right occurs at phase $i-(n+1)$, which is necessarily of length $i-(n+1)$. Then, for any fixed $j=0,\ldots, n$, the next jump to the right occurs at phase $i-j$, which is necessarily of length $n+1-j$. A sum is then performed over the remaining feasible assignments. Summing over all possible $j=0,\ldots, n$ on the left-hand side of \eqref{eq:magic_formula_proof_inductive_step}, one obtains a sum over all feasible assignments such that the first jump to the right occurs at phase $i-(n+1)$, which is the sum on the right-hand side of \eqref{eq:magic_formula_proof_inductive_step}.  Furthermore, for the vector $\vc{a} = (\rho_{i-(n+1)}^{i-(n+1)}, \rho_{i-j}^{n+1-j}, \rho_{i-j+1}^{u_1},\ldots, \rho_i^{u_j})$, we have that $k_{(0)} = k_{(1)} = i-(n+1)$ and $k_{(2)}= i-j$, so that
\begin{align}%
\mathcal B_{i,j,n} = -\hspace{-0.3cm}\prod_{k=i-(n+1)}^{(i-j)-1}\frac{\la_k}{\la_k - \la_{i-j}} = -\hspace{-0.1cm}\prod_{k=k_{(1)}}^{k_{(2)}-1}\frac{\la_k}{\la_k - \la_{k_{(2)}}}.
\end{align}%
It follows that
\begin{align}%
b(\vc{a}) = b(\rho_{i-(n+1)}^{i-(n+1)}, \rho_{i-j}^{n+1-j}, \rho_{i-j+1}^{u_1},\ldots, \rho_i^{u_j}) = \mathcal B_{i,j,n}b(\rho_{i - j}^{i - j},\rho_{i - j + 1}^{u_1}\ldots,\rho_i^{u_j}) 
\end{align}%
Figure \ref{fig:Dyck_path_decomposition_example} illustrates this decomposition in terms of Dyck paths.
% For example $j=0$ yields the single term $\rho_i^{n+1}$, i.e., the first and thus all jumps are at level $i$, and $j=n$ yields $\rho_{i-n} \sum_{(u_1,\ldots,u_j)\in\mathcal U_j} \ldots$, i.e., the first jump is immediately at level $i-n$, and then the ball-allocation scheme stars again from level $i-n+1$. 
\end{proof}

We can finally prove Theorem \ref{th:main_theorem} by applying Proposition \ref{prop:A_inverse_explicit_expression} to invert the matrix $A$.
\begin{proof}[Proof of Theorem \ref{th:main_theorem}.]  
Writing $\vc{s} = A^{-1}\vc{b}$ explicitely yields
\begin{align}\label{eq:explicit_expression_first_hitting_time_probability}%
s_i = \sum_{n=0}^{i-1}(A^{-1})_{i,i-n}\prod_{k=1}^{i-n-1}\frac{\la_k}{\la_k - \la_{i-n}}.
\end{align}%
Plugging \eqref{eq:magic_formula} into \eqref{eq:explicit_expression_first_hitting_time_probability}, using the same inductive argument as in \eqref{eq:magic_formula_proof_inductive_step} and noting that
\begin{align}%
\prod_{k=1}^{i-n-1}\frac{\la_k}{\la_k - \la_{i-n}} = \prod_{k=k_{(0)}}^{k_{(1)}-1}\frac{\la_k}{\la_k - \la_{i-n}},
\end{align}%
gives
\begin{align}%
s_i = \sum_{(u_1,u_2,\ldots,u_i) \in \set{U}_i} \rho_{1}^{u_1} \rho_{2}^{u_2} \cdots \rho_i^{u_i} b(\rho_{1}^{u_1},\rho_{2}^{u_2},\ldots,\rho_i^{u_i}),
\end{align}%
concluding the proof.
\end{proof}

\bibliographystyle{abbrv}
\bibliography{library}

\begin{thebibliography}{10}

\bibitem{bet2016alternative}
G.~Bet.
\newblock An alternative approach to heavy-traffic limits for finite-pool
  queues.
\newblock {\textit{arXiv:1811.09576}}, 2018.

\bibitem{bet2015finite}
G.~Bet, R.~van~der Hofstad, and J.~S.~H. van Leeuwaarden.
\newblock Finite-pool queueing with heavy-tailed services.
\newblock {\textit{Journal of Applied Probability}}, 54(3):921--942, 2017.

\bibitem{bet2016when}
G.~Bet, R.~van~der Hofstad, and J.~S.~H. van Leeuwaarden.
\newblock Big jobs arrive early: From critical queues to random graphs.
\newblock {\textit{Stochastic Systems}}, 2019.

\bibitem{bet2014heavy}
G.~Bet, R.~van~der Hofstad, and J.~S.~H. van Leeuwaarden.
\newblock {Heavy-traffic analysis through uniform acceleration of queues with
  diminishing populations}.
\newblock {\textit{Mathematics of Operations Research}}, 2019.
\newblock \url{https://doi.org/10.1287/moor.2018.0947}.

\bibitem{brown2005statistical}
L.~Brown, N.~Gans, A.~Mandelbaum, A.~Sakov, H.~Shen, S.~Zeltyn, and L.~Zhao.
\newblock {Statistical analysis of a telephone call center}.
\newblock {\textit{Journal of the American Statistical Association}},
  100(469):36--50, 2005.

\bibitem{honnappa2015delta}
H.~Honnappa, R.~Jain, and A.~R. Ward.
\newblock {A queueing model with independent arrivals, and its fluid and
  diffusion limits}.
\newblock {\textit{Queueing Systems}}, 80(1):71--103, 2015.

\bibitem{honnappa2015strategic}
H.~Honnappa and J.~Rahul.
\newblock {Strategic Arrivals into Queueing Networks: The Network Concert
  Queueing Game}.
\newblock {\textit{Operations Research}}, 63(1):247--259, 2015.

\bibitem{honnappa2014transitory}
H.~Honnappa and A.~R. Ward.
\newblock {On transitory queueing}.
\newblock {\textit{arXiv:1412.2321}}, 2014.

\bibitem{kim2014callcenter}
S.-H. Kim and W.~Whitt.
\newblock {Are call center and hospital arrivals well modeled by nonhomogeneous
  Poisson processes?}
\newblock {\textit{Manufacturing {\&} Service Operations Management}},
  16(3):464--480, 2014.

\bibitem{kim2014choosing}
S.-H. Kim and W.~Whitt.
\newblock {Choosing arrival process models for service systems: tests of a
  nonhomogeneous Poisson process}.
\newblock {\textit{Naval Research Logistics}}, 61(1):66--90, 2014.

\bibitem{luczak2004building}
M.~Luczak and P.~Winkler.
\newblock Building uniformly random subtrees.
\newblock {\textit{Random Structures \& Algorithms}}, 24(4):420--443, 2004.

\bibitem{viennot1985combinatorial}
G.~Viennot.
\newblock A combinatorial theory for general orthogonal polynomials with
  extensions and applications.
\newblock In {\textit{Polyn{\^o}mes Orthogonaux et Applications}}, pages 139--157.
  Springer, 1985.

\bibitem{StochasticProcess}
W.~Whitt.
\newblock {\textit{Stochastic-Process Limits. An Introduction to
  Stochastic-Process Limits and Their Application to Queues}}.
\newblock Springer, New York, 2002.

\bibitem{whitt2010queues}
W.~Whitt.
\newblock {Queues with time-varying arrival rates: A bibliography}.
\newblock {Available on
  \url{http://www.columbia.edu/~ww2040/TV_bibliography_091016.pdf}}, 2016.

\end{thebibliography}

\end{document}